\providecommand{\R}{\mathbb R}
\providecommand{\E}{\mathsf E}
\providecommand{\prob}{\mathsf P}
\providecommand{\e}{\mathrm e}
\newtheorem{thm}{Theorem}
\newtheorem{lem}[thm]{Lemma}
\theoremstyle{remark}
\newtheorem*{rem}{Remark}
\newtheorem*{rems}{Remarks}
\title{Borell's formula on a Riemannian manifold and applications}
\author{Joseph Lehec}
\begin{document}
\maketitle
\section{Introduction}
Throughout the article $(\Omega,\mathcal A,\prob)$
is a probability space, equipped with 
a filtration $(\mathcal F_t )_{t\leq T}$
and carrying a standard $n$--dimensional 
Brownian motion $(B_t)_{t\leq T}$. 
The time horizon $T$ shall vary along the article, 
most of the time it will be finite.  
By \emph{standard} we mean that 
$(B_t)$ starts from $0$ and has quadratic covariation given 
by $[B]_t = t I_n$ for all $t\leq T$.  Let $\mathbb H$ be the Cameron-Martin space, namely 
the space of absolutely continuous paths $u\colon [0,T] \to \R^n$,
starting from $0$ and equipped with the norm 
\[
\Vert u \Vert_{\mathbb H} = \left(  \int_0^T \vert \dot u_s \vert^2 \, ds \right)^{1/2} ,
\]
where $\vert \dot u_s \vert$ denotes the Euclidean norm of the derivative of $u$ at time $s$.  
In this context a \emph{drift} is a process which is adapted to the filtration $(\mathcal F_t)$ 
and which belongs to $\mathbb H$ almost surely. Let $\gamma_n$ be the standard Gaussian measure on
$\R^n$. The starting point of the
present article is the so called Borell formula: 
If $f \colon \R^n \to \R$ is measurable and bounded from below then
\begin{equation}\label{eq:borell}
\log \left( \int_{\R^n} \e^{ f } \, d \gamma_n \right) 
= \sup_U \left\{ \E \left[ f ( B_1 + U_1 ) - \frac{1}{2} \Vert U \Vert^2_{\mathbb H} \right]  \right\}
\end{equation}
where the supremum is taken over all drifts $U$ 
(here the time horizon is $T=1$). 
Actually a more general formula holds true, where the function 
$f$ is allowed to depend on the whole trajectory of $(B_t)$ rather than 
just $B_1$. More precisely, let $(\mathbb W , \mathcal B , \gamma )$ 
be the $n$--dimensional Wiener space: $\mathbb W$ is the space of continuous 
paths from $[0,T]$ to $\R^n$, $\mathcal B$ is the Borel $\sigma$--field associated to the
topology given by the uniform convergence (uniform convergence on compact sets 
if $T=+\infty$) and $\gamma$ is the law of the standard Brownian motion. 
If $F \colon \mathbb W \to \mathbb R$ is measurable and bounded from below then 
\begin{equation}\label{eq:boue-dupuis}
\log \left( \int_{\mathbb W} \e^{ F } \, d\gamma \right) 
= \sup_U \left\{ \E \left[ F ( B + U ) - \frac{1}{2} \Vert U \Vert^2_{\mathbb H} \right]  \right\}
\end{equation}
Of course~\eqref{eq:borell} is retrieved by applying the latter formula to 
a functional $F$ of the form $F(w) = f ( w_1 )$. Formula~\eqref{eq:boue-dupuis} is
due to Bou\'e and Dupuis~\cite{boue-dupuis}, we refer to our previous work~\cite{lehec1} 
for more historical comments and an alternate proof of the formula. 
We are interested in the use of such formulas to prove functional 
inequalities. This was initiated by Borell in~\cite{borell}, in which he proved~\eqref{eq:borell}
and showed that it yields the Pr\'ekopa--Leindler inequality very easily. This was further developed 
in the author's works~\cite{lehec1,lehec2} where many other functional inequalities were derived
from~\eqref{eq:borell} and~\eqref{eq:boue-dupuis}. The main purpose of the present article is to establish a version 
of Borell's formula~\eqref{eq:borell} for the Brownian motion on a Riemannian manifold and to give 
a couple of applications, including a new proof of the Brascamp--Lieb inequality on the
sphere of Carlen, Lieb and Loss. 
\section{Borell's formula for a diffusion}
\label{sec:diffusion}
Let $\sigma \colon \R^n \to M_n ( \R)$, let 
$b\colon \R^n \to \R^n$ and assume that the 
stochastic differential equation
\begin{equation}\label{eq:SDE}
d X_t = \sigma ( X_t ) \, d B_t + b ( X_t ) \, dt 
\end{equation}
has a unique strong solution.
We also assume for simplicity that the 
explosion time is $+\infty$. 
Then there exists a measurable functional 
\[
G \colon \R^n \times \mathbb W \to \mathbb W
\]
(it is probably safer to complete the $\sigma$--field $\mathcal B$ 
at this stage) such that for every $x\in \R^n$ the process
$X = G ( x , B )$
is the unique solution of~\eqref{eq:SDE}
starting from $x$. 
This hypothesis is satisfied in particular 
if $\sigma$ and $b$ are locally Lipschitz
and grow at most linearly, 
see for instance~\cite[Chapter~IV]{ikeda}.
The process $(X_t)$ is then a diffusion 
with generator $L$ given by 
\[
L f = \frac 12 \langle \sigma \sigma^T , \nabla^2 f \rangle + \langle b , \nabla f \rangle . 
\]
for every $\mathcal C^2$--smooth function $f$. 
We denote the associated semigroup 
by $(P_t)$: for any test function $f$ 
\[
P_t f (x) = \E_x \left[ f ( X_t ) \right] ,
\]
where as usual the subscript $x$ denotes the starting point of $(X_t)$. 
Fix a finite time horizon $T$. Fix $x\in \R^n$, 
let $f \colon R^n \to \R$ and 
assume that $f$ is bounded from below. 
Applying the representation formula~\eqref{eq:boue-dupuis} to the functional 
\[
F \colon w \in \mathbb W \mapsto f \left( G ( x , w )_T \right) 
\]
we get
\[
\log \int_{\mathbb W} \e^{f \left( G (x,w)_T \right) } \, \gamma ( dw ) 
= \sup_U \left\{  \E \left[ f \left( G ( x, B+U )_T \right) - \frac 12 \Vert U \Vert^2_{\mathbb H} \right] \right\}
\]
where the supremum is taken on all drifts $U$. 
By definition of the semigroup $(P_t)$ we have
\[
\log \int_{\mathbb W} \e^{f \left( G (x,w)_T \right) }  \, \gamma (dw) = \log P_T ( \e^f ) (x) . 
\]
Also, we have the following lemma. 
\begin{lem}\label{lem:truc}
Let $(U_t)_{t\leq T}$ be a drift. 
The process $X^U = G(x,B+U)$ 
is the unique process satisfying 
\begin{equation}\label{eq:XtU}
X_t^U = x + \int_0^t \sigma ( X_s^U ) \, ( d B_s + d U_s ) + \int_0^t  b ( X_s^U ) \, ds ,  
\quad t\leq T 
\end{equation}
almost surely. 
\end{lem}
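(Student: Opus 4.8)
The plan is to reduce the statement to the defining property of the solution map $G$ by means of Girsanov's theorem combined with the strong uniqueness hypothesis for~\eqref{eq:SDE}. The guiding idea is that adding an absolutely continuous adapted drift $U$ to $B$ does not alter the Brownian (martingale) structure, only the underlying probability measure: once we pass to an equivalent measure under which $\tilde B := B + U$ is a genuine Brownian motion, $G(x,\tilde B)$ becomes, by construction, the strong solution driven by $\tilde B$, and it only remains to rewrite that equation back under $\prob$.

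Since $U$ is a drift we only know $\Vert U \Vert_{\mathbb H}^2 = \int_0^T \vert \dot U_s \vert^2\,ds < \infty$ almost surely, with no a priori integrability, so the naive Girsanov exponential need not be a true martingale. I would therefore localise: set $\tau_k = \inf\{ t\leq T : \int_0^t \vert \dot U_s\vert^2\,ds \geq k \} \wedge T$. On $[0,\tau_k]$ Novikov's condition holds, so the stopped exponential $\mathcal E^{(k)}_t = \exp(-\int_0^{t\wedge\tau_k} \dot U_s\cdot dB_s - \frac12 \int_0^{t\wedge\tau_k}\vert \dot U_s\vert^2\,ds)$ is a genuine martingale and defines an equivalent probability $\mathbb Q_k$ under which $(\tilde B_{t\wedge\tau_k})$ is a stopped standard Brownian motion. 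Because $\Vert U\Vert_{\mathbb H}<\infty$ almost surely, one has $\tau_k = T$ for all large $k$, so every assertion proved on $[0,\tau_k]$ will extend to $[0,T]$ upon letting $k\to\infty$.

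Under $\mathbb Q_k$ the process $\tilde B$ is a Brownian motion up to $\tau_k$, so the definition of the functional $G$ gives, $\mathbb Q_k$-almost surely on $[0,\tau_k]$,
\[
G(x,\tilde B)_t = x + \int_0^t \sigma\bigl(G(x,\tilde B)_s\bigr)\, d\tilde B_s + \int_0^t b\bigl(G(x,\tilde B)_s\bigr)\, ds,
\]
the first integral being the It\^o integral against the $\mathbb Q_k$-Brownian motion $\tilde B$. This step also legitimises the evaluation of the merely $\gamma$-almost surely defined map $G$ at the random path $B+U$: the law of $\tilde B$ under $\mathbb Q_k$ is equivalent to $\gamma$, so the exceptional null set on which $G$ is ambiguous is almost surely avoided. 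Now the It\^o integral against a continuous semimartingale is invariant under an equivalent change of measure, and under $\prob$ the decomposition $\tilde B = B + U$ has local-martingale part $B$ and finite-variation part $U$; hence $\int_0^t \sigma(X^U_s)\,d\tilde B_s = \int_0^t \sigma(X^U_s)\,dB_s + \int_0^t \sigma(X^U_s)\,dU_s$, and the displayed identity becomes exactly~\eqref{eq:XtU}, valid $\prob$-almost surely on $[0,\tau_k]$ and then on $[0,T]$.

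For uniqueness I would argue in reverse: if $Y$ satisfies~\eqref{eq:XtU} under $\prob$, the same change of measure turns it into a solution of~\eqref{eq:SDE} driven by the $\mathbb Q_k$-Brownian motion $\tilde B$ on $[0,\tau_k]$, whence strong uniqueness for~\eqref{eq:SDE} forces $Y = G(x,\tilde B) = X^U$ there, and $k\to\infty$ propagates this to $[0,T]$. The main obstacle is the integrability gap in Girsanov, namely making the change of measure rigorous when only $\Vert U\Vert_{\mathbb H}<\infty$ almost surely is available; the localisation above is precisely what closes this gap, and the equivalence of the laws of $\tilde B$ and $\gamma$ is what makes composing with $G$ meaningful. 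Everything else is a matter of identifying stochastic integrals across equivalent measures, which is routine.
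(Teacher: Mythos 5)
Your argument is correct and follows essentially the same route as the paper: apply Girsanov under a localised/truncated drift (Novikov holds there), use strong uniqueness and the definition of $G$ under the equivalent measure where $B+U$ is Brownian, transfer back by equivalence of measures, and remove the localisation by letting the stopping times increase to $T$. The only cosmetic difference is that the paper first treats the case of bounded $\Vert U\Vert_{\mathbb H}$ and then stops the drift, whereas you localise from the outset; the content is identical.
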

\begin{proof}
Assume first that $\Vert U \Vert_{\mathbb H}$ is
bounded. Then by Novikov's criterion the process $(D_t)$
given by
\[
D_t = \exp \left( - \int_0^t \langle \dot U_s , d B_s \rangle 
- \frac 12 \int_0^t \vert \dot U_s \vert^2 \, ds \right)
\]
is a uniformly integrable martingale. Moreover, 
according to Girsanov's formula, under the measure 
$\mathsf Q$ given by $d \mathsf Q = D_T \, d \mathsf P$,
the process $B+U$ is a standard Brownian motion on $[0,T]$, 
see for instance~\cite[section 3.5]{karatzas} for more details. 
Now since the stochastic differential equation~\eqref{eq:SDE}
is assumed to have a unique strong solution and by definition of $G$, 
almost surely for $\mathsf Q$, 
the unique process satisfying~\eqref{eq:XtU} is $X^U = G (x,B+U)$. 
Since $\mathsf Q$ and $\mathsf P$ are equivalent this is the result. 
For general $U$, the result follows by applying the 
bounded case to $(U_t) = ( U_{t\wedge T_n} )$ where $T_n$ is the 
stopping time
\[
T_n = \inf \left\{ t \geq 0 \colon \; \int_0^t \vert \dot U_s \vert^2 \, ds \geq n \right\} ,
\]
and letting $n$ tend to $+\infty$. 
\end{proof}
To sum up, we have established the following result. 
\begin{thm}\label{thm:diffusion}
For any function $f \colon \R^n \to \R$ bounded from 
below we have 
\[
\log P_T ( \e^f ) ( x ) =  
\sup_U \left\{  \E \left[ f ( X_T^U ) - \frac 12 \Vert U \Vert^2_{\mathbb H} \right] \right\} , 
\]
where the supremum is taken over all drifts $U$ 
and the process $X^U$ is the unique solution of~\eqref{eq:XtU}. 
\end{thm}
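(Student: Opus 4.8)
The plan is to obtain the identity by assembly rather than by a fresh computation: every analytic ingredient has already been prepared in the preceding discussion, so the theorem follows by feeding the Bou\'e--Dupuis formula~\eqref{eq:boue-dupuis} the right functional and then translating both sides of the resulting variational identity into the language of the diffusion $(X_t)$. Concretely, I would take as the driving functional
\[
F \colon w \in \mathbb W \mapsto f \left( G ( x , w )_T \right),
\]
and verify that it falls within the scope of~\eqref{eq:boue-dupuis}.

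First I would check the hypotheses of the Bou\'e--Dupuis formula for this $F$. Since $f$ is bounded from below and $G$ is measurable (with $\mathcal B$ completed as noted), the functional $F$ is measurable and bounded from below, so~\eqref{eq:boue-dupuis} applies and gives
\[
\log \int_{\mathbb W} \e^{ f ( G(x,w)_T ) } \, \gamma (dw)
= \sup_U \left\{ \E \left[ f \left( G ( x, B+U )_T \right) - \frac 12 \Vert U \Vert^2_{\mathbb H} \right] \right\} .
\]
Next I would identify the left-hand side: under $\gamma$ the coordinate process is a standard Brownian motion, so $G(x,\cdot)_T$ has the law of $X_T$ started at $x$, whence the integral equals $\E_x [ \e^{ f ( X_T ) } ] = P_T ( \e^f ) ( x )$, which is exactly the claim of the displayed equation preceding the statement. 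Finally I would invoke Lemma~\ref{lem:truc}, which identifies $G(x,B+U)$ with the unique solution $X^U$ of~\eqref{eq:XtU}; replacing $f ( G ( x, B+U )_T )$ by $f ( X_T^U )$ inside the supremum then yields the asserted formula.

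The genuine content of the argument is entirely contained in the two inputs, so I expect no real obstacle at the level of the theorem itself; the work lies upstream. The Bou\'e--Dupuis formula~\eqref{eq:boue-dupuis} carries the delicate probabilistic estimates, and the pathwise identification $G(x,B+U) = X^U$ rests on the Girsanov change of measure in Lemma~\ref{lem:truc}. The one point I would flag as deserving care is that the identity must hold across the \emph{entire} class of drifts $U$, not merely bounded ones, since the supremum ranges over all of them; but this is precisely what the localization via the stopping times $T_n$ in the proof of Lemma~\ref{lem:truc} secures, so nothing further is required and the three displays combine to give the result.
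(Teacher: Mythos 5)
Your proposal matches the paper's argument exactly: the theorem is stated as a summary of the preceding discussion, which applies~\eqref{eq:boue-dupuis} to $F(w) = f(G(x,w)_T)$, identifies the left-hand side with $\log P_T(\e^f)(x)$ via the definition of the semigroup, and invokes Lemma~\ref{lem:truc} to replace $G(x,B+U)$ by $X^U$. The assembly is correct and complete.
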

\begin{rems}
This direct consequence of the representation formula~\eqref{eq:boue-dupuis}
was already noted by Bou\'e and Dupuis. They used it 
to recover Freidlin and Wentzell's large deviation principle 
as the diffusion coefficient is sent to $0$. 
Let us also note that the non explosion hypothesis is not essential. 
One can consider $\R^n \cup \{\infty\}$, 
the one point compactification of $\R^n$, 
set $X_t = \infty$ after explosion time,
and restrict to functions $f$ that tend to 
$0$ at infinity. In the same way we could also 
deal with a Dirichlet boundary condition. 
\end{rems}
\section{Borell's formula on a Riemannian manifold}
\label{sec:manifold}
Let $(M,g)$ be a complete Riemannian manifold of dimension $n$.
In this section we wish to establish a Borell type formula
for the Brownian motion on $M$. To do so we need to recall 
first the intrinsic construction of the
Brownian motion on $M$. 
\\
Let us start with some definitions from 
Riemannian geometry. 
Recall that the orthonormal frame bundle 
$\mathcal O (M)$ is the set of 
$(n+1)$--tuples 
\[
\phi = ( x , \phi^1 , \dotsc , \phi^n ) 
\]
where $x$ is in $M$ 
and $(\phi^1,\dotsc,\phi^n)$ is
an orthonormal basis
of $T_x ( M )$.
Given an element $\phi = (x,\phi^1,\dotsc,\phi^n)$ of 
$\mathcal O (M)$ and a vector $v \in T_x(M)$, the 
\emph{horizontal lift}
of $v$ at $\phi$, denoted $\mathcal H (v)$, 
is an element of $T_\phi ( \mathcal O(M) )$ 
defined as follows:
Choose a curve $(x_t)$ starting from $x$
with speed $v$ and for $i\leq n$ let  
$\phi^i_t$ be the parallel translation of $\phi^i$ 
along $(x_t)$. Since parallel translation preserves
the inner product, we thus obtain a smooth curve $(\phi_t)$
on $\mathcal O (M)$ and we can set
\[
\mathcal H (v) = \dot \phi_0 . 
\]
This is a lift of $v$ in the sense 
that for any smooth $f$ on $M$ 
\[
\mathcal H (v) (f \circ \pi ) = v ( f) ,  
\]
where $\pi \colon \mathcal O (M) \to M$ is the 
canonical projection. 
Now for $i\leq n$ we define a vector 
field on $\mathcal O (M)$ by setting 
\[
\mathcal H^i ( x,\phi^1,\dotsc,\phi^n ) = \mathcal H (\phi^i)  .
\]
The operator
\[
\Delta_{\mathcal H} = \sum_{i=1}^n (\mathcal H^i)^2 
\]
is called the \emph{horizontal Laplacian}. It is 
related to the Laplace--Beltrami operator on $M$, 
denoted $\Delta$, through the following commutation 
property: for any smooth $f$ on $M$ we have 
\begin{equation}\label{eq:commute}
\Delta_{ \mathcal H } ( f \circ \pi ) = \Delta (f) \circ \pi .
\end{equation}
Note that the horizontal Laplacian is by definition a sum 
of squares of vector fields, and that this is typically not the 
case for the Laplace--Beltrami operator. We are now in a position 
to define the horizontal Brownian motion on $\mathcal O(M)$. 
Let
\[
B_t = ( B_t^1, \dotsc, B_t^n ) 
\]
be a standard $n$--dimensional Brownian motion. 
We consider the following stochastic differential equation on $\mathcal O (M)$
\begin{equation}\label{eq:hbm}
d \Phi_t = \sum_{i=1}^d \mathcal H^i ( \Phi_t ) \circ d B^i_t . 
\end{equation}
Throughout the rest of the article,
the notation $H \circ \, d M$
denotes the Stratonovitch integral. 
The equation~\eqref{eq:hbm} is a short way of saying that 
for any smooth function $g$ on $\mathcal O(M)$ we have 
\[
g( \Phi_t ) = g ( \Phi_0 ) + \sum_{i=1}^n \int_0^t \mathcal H^i ( g ) ( \Phi_t ) \circ d B^i_t . 
\]
This always has a strong solution, see~\cite[Theorem V.1.1.]{ikeda}. 
Let us assume additionally that it does not explode in finite 
time. This is the case in particular if the Ricci curvature of $M$ 
is bounded from below, see for instance~\cite[section~4.2]{hsu}, 
where a more precise criterion is given. 
Translating the equation above
in terms of It\^o increments we easily get
\[
d g ( \Phi_t ) 
 = \sum_{i=1}^n \mathcal H^i (g) ( \Phi_t ) \, d B^i_t + \frac 12 \Delta_{\mathcal H} g ( \Phi_t ) \, dt.   
\]
Let $(X_t)$ be the process given by $X_t = \pi ( \Phi_t )$. Applying the 
previous formula and~\eqref{eq:commute} we see that for any smooth $f$ 
on $M$
\begin{equation}\label{eq:ftruc}
d f (X_t) = \sum_{i=1}^n \Phi^i_t (f) (X_t) \, d B^i_t  + \frac 12 \Delta f ( X_t ) \, dt.
\end{equation}
In particular 
\[
f ( X_t ) - \frac 12 \int_0^t \Delta f( X_s ) \, ds , \quad t \geq 0 
\]
is a local martingale. This shows that $(X_t)$ is a Brownian motion on $M$. 
The process $(X_t)$ is called the \emph{stochastic development}
of $(B_t)$. In the sequel, it will be convenient to identify 
the orthogonal basis $\Phi^1_t,\dotsc,\Phi^n_t$ with the 
orthogonal map 
\[
x \in \R^n \to \sum_{i=1}^n x_i \Phi^i_t \in T_{X_t} (M).
\] 
Then the equation~\eqref{eq:ftruc} can be rewritten 
\[
d f ( X_t ) =  \langle \nabla f ( X_t ) , \Phi_t \, d B_t \rangle + \frac 12 \Delta f ( X_t ) \, dt.   
\]
Similarly the equation~\eqref{eq:hbm}
can be rewritten in a more concise form
\begin{equation}\label{eq:hbm2}
d \Phi_t = \mathcal H ( \Phi_t ) \circ d B_t  . 
\end{equation}
To sum up, the process $(\Phi_t)$ is an orthonormal 
basis above $(X_t)$ which is used to map the Brownian increment 
$d B_t$ from $\R^n$ to the tangent space of $M$ at $X_t$.
\\ 
Now we establish a Borell type formula for the process
$(X_t)$. We know that there exists
a measurable functional 
\[
G \colon \mathcal O (M) \times \mathbb W \to \mathcal C ( \mathbb R , \mathcal O (M) )
\]
such that the process $\Phi = G ( \phi , B)$ 
is the unique solution of~\eqref{eq:hbm2} starting from $\phi$.  
Let $\phi \in \mathcal O(M)$, let $T>0$, let $f\colon M \to \R$ and 
assume that $f$ is bounded from below. Applying the representation 
formula~\eqref{eq:boue-dupuis} to the functional 
\[
F \colon w \in \mathbb W \mapsto f \circ \pi \left( G ( \phi , w )_T \right)
\]
we get
\[
\log \left( \int_{\mathbb W} \e^{ f \circ \pi ( G(\phi,B)_T ) } \, d\gamma \right)  = 
\sup_U \left\{ \E \left[ f \circ \pi ( G( \phi , B+U )_T ) - \frac 12 \Vert U \Vert^2_{\mathbb H} \right] \right\} .
\]
Let $x = \pi ( \phi )$. Since $\pi ( G ( \phi , B ) )$ is a Brownian motion 
on $M$ starting from $x$ we have 
\[
\log \left( \int_{\mathbb W} \e^{ f \circ \pi ( G(\phi,B)_T ) } \, d\gamma \right) 
= \log P_T ( \e^f ) (x) , 
\]
where $(P_t)$ is the heat semigroup on $M$. Also, letting $\Phi^U = G ( \phi , B+U)$ and
reasoning along the same lines as in the proof of Lemma~\ref{lem:truc}, we obtain that
$\Phi^U$ is the only solution to 
\[
d \Phi^U_t = \mathcal H ( \Phi^U_t ) \circ ( d B_t + d U_t )  
\]
starting from $\phi$. We also let $X^U = \pi ( \Phi^U )$
and call this process the stochastic development of $B+U$ 
starting from $\phi$. 
To sum up, we have established the following result. 
\begin{thm}\label{thm:manifold}
Let $f \colon M \to \R$, let $\phi\in \mathcal O(M)$, let $x=\pi (\phi)$
and let $T >0$. If $f$ is bounded from below then 
\[
\log P_T \left( \e^f \right) (x)
= \sup_{U} \left\{  \E \left[ f( X_T^U ) - \frac 12 \Vert U \Vert^2_{\mathbb H} \right] \right\} ,
\]
where the supremum is taken on all drifts $U$ and where given a drift $U$,
the process $X^U$ is the stochastic development of $B+U$ starting from $\phi$. 
\end{thm}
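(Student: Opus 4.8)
The plan is to obtain the formula as a direct specialization of the Bou\'e--Dupuis representation \eqref{eq:boue-dupuis}, exactly as was done for the Euclidean diffusion in Theorem~\ref{thm:diffusion}. First I would fix the functional
\[
F \colon w \in \mathbb W \mapsto f \circ \pi \bigl( G(\phi, w)_T \bigr),
\]
which is measurable and bounded from below whenever $f$ is, and apply \eqref{eq:boue-dupuis} to it. The left-hand side of the resulting identity is $\log \bigl( \int_{\mathbb W} \e^{F}\, d\gamma \bigr)$ and the right-hand side is the supremum over drifts $U$ of $\E\bigl[ F(B+U) - \tfrac12 \Vert U \Vert^2_{\mathbb H} \bigr]$. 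It then remains to identify each side with the corresponding object in the statement.

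For the left-hand side I would use that $X = \pi \bigl( G(\phi, B) \bigr)$ is, by the intrinsic construction recalled above, a Brownian motion on $M$ started at $x = \pi(\phi)$; hence $\int_{\mathbb W} \e^{F}\, d\gamma = \E_x\bigl[ \e^{f(X_T)} \bigr] = P_T(\e^f)(x)$ by definition of the heat semigroup. For the right-hand side I would set $\Phi^U = G(\phi, B+U)$ and $X^U = \pi(\Phi^U)$, so that $F(B+U) = f(X_T^U)$; the only nontrivial point is to verify that $\Phi^U$ is genuinely the stochastic development of $B+U$, that is, the unique solution of $d\Phi^U_t = \mathcal H(\Phi^U_t) \circ (dB_t + dU_t)$ starting from $\phi$. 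This is the manifold analogue of Lemma~\ref{lem:truc}, and I would prove it the same way: under the Novikov--Girsanov change of measure $\mathsf Q$ with density $D_T$, the process $B+U$ is a standard Brownian motion, so by the defining property of $G$ and strong uniqueness for \eqref{eq:hbm2}, $G(\phi, B+U)$ solves the driven equation $\mathsf Q$--almost surely; since $\mathsf Q$ and $\mathsf P$ are equivalent the same holds $\mathsf P$--almost surely, with the usual localization by stopping times to remove the boundedness assumption on $\Vert U \Vert_{\mathbb H}$.

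The step I expect to require the most care is precisely this identification of $\Phi^U$ in the Stratonovich setting. Girsanov's theorem is phrased for It\^o integrals, whereas \eqref{eq:hbm2} is a Stratonovich equation on the frame bundle $\mathcal O(M)$; the point to check is that the It\^o--Stratonovich correction term is unaffected by the change of drift. This is ultimately because $U$ has finite energy and hence locally bounded variation, so that $[B+U]_t = [B]_t = t\, I_n$, and the correction term appearing when one rewrites \eqref{eq:hbm2} in It\^o form depends only on the quadratic covariation of the driving process and not on its drift part. Consequently the driven Stratonovich equation has the same form whether we regard $B+U$ as a semimartingale under $\mathsf P$ or as a Brownian motion under $\mathsf Q$, and the Girsanov argument of Lemma~\ref{lem:truc} applies verbatim. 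Modulo this verification, the theorem follows by taking the supremum over all drifts $U$ on both sides of the identity furnished by \eqref{eq:boue-dupuis}.
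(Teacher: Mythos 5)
Your proposal is correct and follows essentially the same route as the paper: apply the Bou\'e--Dupuis formula \eqref{eq:boue-dupuis} to $F(w)=f\circ\pi(G(\phi,w)_T)$, identify the left side with $\log P_T(\e^f)(x)$, and identify $\Phi^U=G(\phi,B+U)$ as the solution of the driven Stratonovich equation by the Girsanov argument of Lemma~\ref{lem:truc}. Your extra remark that the It\^o--Stratonovich correction is unchanged because the finite-variation drift does not alter the quadratic covariation is a point the paper leaves implicit, and it is correct.
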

\section{Brascamp--Lieb inequality on the sphere}
In the article~\cite{lehec2}, we explained how to derive the Brascamp--Lieb inequality
and its reversed version from Borell's formula. In this 
section we extend this to the sphere, and give a proof based on Theorem~\ref{thm:manifold}
of the spherical version of the Brascamp--Lieb inequality, 
due to Carlen, Lieb and Loss in~\cite{carlen}. 
\begin{thm}\label{thm:bl-sphere}
Let $g_1,\dotsc,g_{n+1}$ be non--negative 
functions on the interval $[-1,1]$. Let $\sigma_n$ 
be the Haar measure on the sphere $\mathbb S^n$, normalized 
to be a probability measure. We have
\[
\int_{\mathbb S^n} \prod_{i=1}^{n+1} g_i (x_i) \, \sigma_n (dx)
\leq \prod_{i=1}^{n+1} \left( \int_{\mathbb S^n} g_i(x_i)^2 \, \sigma_n (dx) \right)^{1/2}
\]
\end{thm}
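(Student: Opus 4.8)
The plan is to prove the Carlen–Lieb–Loss inequality using the variational formula of Theorem~\ref{thm:manifold} applied to the heat semigroup on the sphere $\mathbb{S}^n$. The strategy mirrors the author's earlier Euclidean derivation of Brascamp–Lieb from Borell's formula. First I would set $f_i = \log g_i$ (handling the case where some $g_i$ vanishes by a standard approximation, replacing $g_i$ by $g_i + \varepsilon$ and letting $\varepsilon \to 0$, since the coordinate functions $x_i$ are bounded). The aim is to compare $\int_{\mathbb{S}^n} \prod_i g_i(x_i)\,\sigma_n(dx)$ against the product of the $L^2$ norms. Since the heat semigroup $(P_t)$ on $\mathbb{S}^n$ is ergodic with invariant measure $\sigma_n$, one has $P_t h(x) \to \int_{\mathbb{S}^n} h\,d\sigma_n$ as $t \to \infty$ for every starting point $x$; thus I would express each integral as a large-time limit of the semigroup and then feed the variational formula in at finite horizon $T$ before letting $T \to \infty$.

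The key analytic step is the variational duality. For each factor, Theorem~\ref{thm:manifold} gives
\[
\log P_T(g_i)(x) = \sup_{U} \left\{ \E\left[ \log g_i\bigl( (X_T^U)_i \bigr) - \tfrac{1}{2}\Vert U \Vert_{\mathbb{H}}^2 \right] \right\},
\]
where $(X_T^U)_i$ denotes the $i$-th coordinate of the developed process on $\mathbb{S}^n \subset \R^{n+1}$. The idea is to choose one single drift $U$ (equivalently, one stochastic development $X^U = \pi(\Phi^U)$ started from a common frame $\phi$ above a fixed point $x$) and to use it simultaneously in the lower bound for all $n+1$ factors on the right-hand side of the target inequality, while using the exact variational identity (the $\sup$) for the large-time limit that produces the left-hand side. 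Concretely, I would bound the logarithm of the left side from above by $\E[\sum_i \log g_i((X_T^U)_i) - \tfrac12 \Vert U\Vert^2_{\mathbb H}]$ for the optimal (or near-optimal) $U$, and bound the logarithm of each $L^2$ norm on the right from below using the \emph{same} $U$ with an appropriate factor of $2$, so that $\sum_i \tfrac12 \log \int g_i^2\,d\sigma_n \ge \E[\sum_i \log g_i((X_T^U)_i) - \tfrac12 \Vert U\Vert^2_{\mathbb H}]$ after accounting for the rescaling $g_i \mapsto g_i^2$. Matching these two bounds reduces the inequality to a pointwise geometric fact.

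The geometric heart of the argument, and what I expect to be the main obstacle, is the following constraint: for a point $x = (x_1,\dots,x_{n+1}) \in \mathbb{S}^n$, the gradients on the sphere of the coordinate functions $x \mapsto x_i$ satisfy a tightness relation, namely $\sum_{i=1}^{n+1} |\nabla_{\mathbb{S}^n} x_i|^2 = n$ and more importantly the vectors $\nabla_{\mathbb{S}^n} x_i$ form a frame whose structure exactly matches the factor $2$ and the number $n+1$ of functions. The clever point in the Carlen–Lieb–Loss inequality is that the constant is $1$ with exponent $1/2$ (rather than some dimension-dependent constant), and this must emerge from a clean cancellation. I would therefore track how the quadratic cost $\tfrac12\Vert U\Vert_{\mathbb H}^2$ interacts with the drift of each coordinate process $d(X_t^U)_i = \langle \nabla_{\mathbb S^n} x_i(X_t^U), \Phi_t^U\,(dB_t + dU_t)\rangle + \tfrac12 \Delta_{\mathbb S^n} x_i(X_t^U)\,dt$, using that $x_i$ is an eigenfunction of $\Delta_{\mathbb S^n}$ (with eigenvalue $-n$, since $\Delta_{\mathbb S^n} x_i = -n x_i$ as a spherical harmonic of degree one). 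Controlling this It\^o expansion, choosing the drift to align with the coordinate gradients so that Jensen's inequality and the pointwise sphere identity combine to give exactly the exponent $1/2$, is where the real work lies; the rest is bookkeeping in the $T\to\infty$ limit justified by ergodicity of the heat flow.
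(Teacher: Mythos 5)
Your proposal has the right skeleton (take $f_i=\log g_i$, run the variational formula of Theorem~\ref{thm:manifold} at a finite horizon $T$, let $T\to\infty$ by ergodicity), but the central step is missing, and the bookkeeping you sketch for it does not close. If you lower-bound each factor $\log P_T(g_i^2)(x)$ using the \emph{same} $\R^n$--valued drift $U$ via Theorem~\ref{thm:manifold}, each of the $n+1$ factors pays the full cost $\tfrac12\Vert U\Vert_{\mathbb H}^2$; after taking square roots you get $\sum_i\tfrac12\log P_T(g_i^2)(x)\geq \E[\sum_i f_i((X_T^U)_i)-\tfrac{n+1}{4}\Vert U\Vert^2_{\mathbb H}]$, and $\tfrac{n+1}{4}\Vert U\Vert^2_{\mathbb H}\geq\tfrac12\Vert U\Vert^2_{\mathbb H}$ goes the wrong way as soon as $n\geq 2$. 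The point of the paper's argument is precisely to avoid this: each coordinate process $P_i(X_t^U)$ is a one--dimensional Jacobi diffusion driven only by the \emph{projection} of the noise and the drift onto the unit vector $\theta^i_t=\Phi_t^*\bigl(\nabla P_i(X^U_t)/\vert\nabla P_i(X^U_t)\vert\bigr)$, so one sets $dU^i_t=\langle\theta^i_t,dU_t\rangle$ and applies the one--dimensional Borell formula (Theorem~\ref{thm:diffusion}, for the Jacobi semigroup $Q_t$ and the function $2f_i$, which requires checking strong uniqueness for the H\"older--$1/2$ diffusion coefficient) with the drift $U^i$ rather than $U$.

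The factor $2$ then comes from the frame inequality $\sum_{i=1}^{n+1}\langle\theta^i,y\rangle^2\leq 2\vert y\vert^2$ for $y\perp x$, which gives $\sum_i\Vert U^i\Vert^2_{\mathbb H}\leq 2\Vert U\Vert^2_{\mathbb H}$ and hence $f(X_T^U)-\tfrac12\Vert U\Vert^2_{\mathbb H}\leq\sum_i\bigl(f_i(P_i(X_T^U))-\tfrac14\Vert U^i\Vert^2_{\mathbb H}\bigr)$. Note that this is an inequality about the \emph{normalized} gradients, proved from $\sum_i y_i^2=\vert y\vert^2$ and $\sum_i x_i^2=1$; the identity $\sum_i\vert\nabla_{\mathbb S^n}x_i\vert^2=n$ that you invoke is a different statement and does not yield the dimension-free constant $2$ (nor the exponent $1/2$). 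You correctly sense that a pointwise geometric fact about the coordinate gradients must produce the $2$, but you have not identified it, and you explicitly defer the It\^o computation that turns it into the energy splitting; since that splitting is the entire content of the proof beyond the two Borell formulas, the proposal as written has a genuine gap rather than an alternative route.
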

\begin{rem}
The inequality does not hold if we replace the $L^2$ norm in the right--hand side
 by a smaller $L^p$ norm, like the $L^1$ norm. 
Somehow this $2$ accounts for the fact that the coordinates of a uniform 
random vector on $\mathbb S^n$ are not independent. We refer to the introduction of~\cite{carlen}
for a deeper insight on this inequality. 
\end{rem}
In addition to the Borell type formulas 
established in the previous two sections, our
proof relies on a sole inequality, 
spelled out in the lemma below. 
Let $P_i \colon \mathbb S^n \to [-1;1]$
be the application that maps $x$ to its $i$--th coordinate $x_i$. 
The spherical gradient of $P_i$ at $x$ is the projection of the coordinate 
vector $e_i$ onto $x^\perp$:
\[
\nabla P_i (x) = e_i - x_i x. 
\] 
\begin{lem}\label{lem:frame}
Let $x\in \mathbb S^n$ and let $y\in x^\perp$. For $i\leq n+1$, 
if $\nabla  P_i ( x ) \neq 0$ let
\[
\theta^i = \frac { \nabla P_i ( x ) }{ \vert \nabla P_i ( x ) \vert } 
\]
and let $\theta_i$ be an arbitrary unit vector of $x^\perp$ otherwise. 
Then for any $y\in x^\perp$ we have 
\[
\sum_{i=1}^{n+1} \langle \theta^i , y \rangle^2 \leq 2 \vert y \vert^2 .
\]
\end{lem}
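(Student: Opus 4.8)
The plan is to turn the geometric statement into an elementary inequality about the coordinates of $x$ and $y$, and then to reduce that inequality to a one--variable convexity fact.

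First I would compute the two quantities entering the sum. Since $\nabla P_i(x) = e_i - x_i x$ and $|x| = 1$, we get $|\nabla P_i(x)|^2 = 1 - x_i^2$, while for $y \in x^\perp$ we have $\langle \nabla P_i(x), y\rangle = y_i - x_i \langle x, y\rangle = y_i$. Hence $\langle \theta^i, y\rangle^2 = y_i^2/(1 - x_i^2)$ whenever $x \neq \pm e_i$, and the claim becomes
\[
\sum_{i=1}^{n+1} \frac{y_i^2}{1 - x_i^2} \leq 2\,|y|^2, \qquad y \in x^\perp .
\]
The degenerate case $x = \pm e_j$ (where one $\theta^i$ is chosen arbitrarily) I would dispatch directly: there $y_j = 0$, the $j$--th term contributes at most $|y|^2$ by Cauchy--Schwarz and the remaining $n$ terms sum to $\sum_{i \neq j} y_i^2 = |y|^2$.

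Next I would peel off a tight--frame identity. Writing $1/(1 - x_i^2) = 1 + x_i^2/(1 - x_i^2)$ and using $\sum_i y_i^2 = |y|^2$, the displayed inequality is equivalent to
\[
\sum_{i=1}^{n+1} \frac{x_i^2\, y_i^2}{1 - x_i^2} \leq |y|^2 \qquad (\ast).
\]
Discarding the coordinates with $x_i = 0$ (they add only to the right--hand side), I set $w_i = x_i y_i$, so that the constraint $y \in x^\perp$ reads $\sum_i w_i = 0$ and $(\ast)$ becomes $\sum_i c_i w_i^2 \leq 0$ for every $w$ orthogonal to $\mathbf 1$, where $c_i = (2x_i^2 - 1)/(x_i^2(1 - x_i^2))$.

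The crux is the following observation: because $\sum_i x_i^2 = 1$, at most one index can satisfy $x_i^2 > 1/2$, hence at most one $c_i$ is positive. If none is positive the inequality is trivial. Otherwise, say $c_1 > 0$ and $c_i < 0$ for $i \geq 2$; optimizing $\sum_i c_i w_i^2$ under $\sum_i w_i = 0$ by Cauchy--Schwarz (spreading $-w_1$ over the remaining coordinates proportionally to $1/|c_i|$) shows that $\sum_i c_i w_i^2 \leq 0$ holds for all such $w$ precisely when
\[
c_1 \sum_{i \geq 2} \frac{1}{|c_i|} \leq 1 .
\]
Setting $\psi(u) = u(1-u)/(1-2u)$ one checks $1/c_1 = \psi\bigl(\sum_{i \geq 2} x_i^2\bigr)$ and $1/|c_i| = \psi(x_i^2)$ for $i \geq 2$, so this is exactly the superadditivity statement $\sum_{i \geq 2} \psi(x_i^2) \leq \psi\bigl(\sum_{i \geq 2} x_i^2\bigr)$. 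I expect this last step to be the main obstacle, and I would settle it by noting $\psi(0) = 0$ and $\psi(u) = \tfrac{u}{2} - \tfrac14 + \tfrac{1}{4(1 - 2u)}$, whence $\psi''(u) = 2(1 - 2u)^{-3} > 0$ on $(0, 1/2)$; convexity together with $\psi(0) = 0$ forces superadditivity there, and the bound $2\,|y|^2$ follows.
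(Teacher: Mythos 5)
Your proof is correct, but it takes a far longer road than the one in the paper, and the fork happens exactly at your inequality $(\ast)$. The paper's argument is: writing $\theta^i=(e_i-x_ix)/\sqrt{1-x_i^2}$ and using $y\perp x$, one has the identity $\langle\theta^i,y\rangle^2=y_i^2+x_i^2\langle\theta^i,y\rangle^2$; since $\vert\theta^i\vert=1$, Cauchy--Schwarz gives $\langle\theta^i,y\rangle^2\le y_i^2+x_i^2\vert y\vert^2$, and summing over $i$ with $\sum_i y_i^2=\vert y\vert^2$ and $\sum_i x_i^2=1$ finishes the proof (the case $x=\pm e_j$ being handled separately, as you do). Your $(\ast)$, namely $\sum_i x_i^2y_i^2/(1-x_i^2)\le\vert y\vert^2$, is precisely $\sum_i x_i^2\langle\theta^i,y\rangle^2\le\vert y\vert^2$, so at that point the crude bound $\langle\theta^i,y\rangle^2\le\vert y\vert^2$ would have ended the argument in one line. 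Instead you exploit the constraint $\sum_i x_iy_i=0$ via a constrained optimization on the hyperplane and the superadditivity of $\psi(u)=u(1-u)/(1-2u)$; all the steps check out (the sign analysis of the $c_i$, the Cauchy--Schwarz spreading of $-w_1$, the identities $1/c_1=\psi\bigl(\sum_{i\ge2}x_i^2\bigr)$ and $1/\vert c_i\vert=\psi(x_i^2)$, and superadditivity from convexity plus $\psi(0)=0$). What the extra work buys is a sharp characterization of when $\sum_i c_iw_i^2\le0$ holds on the hyperplane, i.e.\ it does not throw away the orthogonality constraint as the paper's Cauchy--Schwarz step does; but since the constant $2$ is already attained (take $x$ with two coordinates equal to $1/\sqrt2$ in absolute value), this gains nothing for the lemma as stated, and the paper's two-line computation is the better proof to record.
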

\begin{proof}
Assume first that $\nabla P_i (x) \neq 0$ for every $i$. 
Since $\nabla P_i (x) = e_i - x_i x$ and $y$ is orthogonal to $x$
we then have 
\[
\langle \theta^i , y \rangle^2 
= y_i^2 + x_i^2 \langle \theta^i , y \rangle^2
\leq y_i^2 + x_i^2 \vert y \vert^2 .
\]
Summing this over $i\leq n+1$ yields the result. On the other hand, 
if there exists $i$ such that $\nabla P_i (x)= 0$ then $x = \pm e_i$
and it is almost immediate to check that the desired inequality holds true. 
\end{proof}
\begin{proof}[Proof of Theorem~\ref{thm:bl-sphere}]
Let us start by describing the behaviour of a given 
coordinate of a Brownian motion on $\mathbb S^n$.  
Let $(B_t)$ be a standard Brownian motion on $\R^n$,
let $\phi$ be a fixed element of $\mathcal O ( \mathbb S^n)$ 
and let $(\Phi_t)$ be the horizontal Brownian motion given by
\[
\Phi_0 = \phi  \quad \text{and} \quad d \Phi_t = \mathcal H ( \Phi_t ) \circ d B_t . 
\]
We also let $X_t = \pi ( \Phi_t )$ be the stochastic development 
of $(B_t)$ and $X_t^i = P_i ( X_t)$, for every $i\leq n+1$. We have 
\begin{equation}
\label{eq:jacobi}
d X_t^i = \langle \nabla P_i ( X_t ) , \Phi_t d B_t \rangle + \frac 1 2  \Delta P_i ( X_t ) \, dt . 
\end{equation}
Let $\theta$ be an arbitrary unit vector of $\R^n$ 
and define a process $(\theta^i_t)$ by
\[
\theta^i_t = 
\left\{
\begin{array}{ll}
\Phi_t^* \left( \frac { \nabla P_i ( X_t ) }{ \vert \nabla P_i ( X_t ) \vert } \right) ,
& \text{if }  \nabla  P_i (X_t) \neq 0 , \\
\theta & \text{otherwise.} 
\end{array}
\right. 
\]
Since $\Phi_t$ is an orthogonal map $\theta_t$
belongs to the unit sphere of $\R^n$. 
Consequently the process $(W^i_t)$ defined by 
$d W^i_t = \langle \theta^i_t , d W_t \rangle$
is a one dimensional standard Brownian motion.  
Observe that $\vert \nabla P_i \vert = (1 - P_i^2 )^{1/2}$ and 
recall that $P_i$ is an eigenfunction for the spherical Laplacian: 
$\Delta P_i = - n P_i$. Equality~\eqref{eq:jacobi} becomes
\[
d X^i_t = \left( 1- (X^i_t)^2 \right)^{1/2} \, d W^i_t - \frac n 2 \, X^i_t \, dt .  
\]
This stochastic differential equation is usually referred to 
as the Jacobi diffusion in the literature, see for instance~\cite[section 2.7.4]{bakry}. 
What matters for us is that it does possess a unique strong solution. 
Indeed the drift term is linear and although the diffusion factor $(1-x^2)^{1/2}$ 
is not locally Lipschitz, it is H\"older continuous with exponent $1/2$,
which is sufficient to insure strong uniqueness in dimension $1$, 
see for instance~\cite[section V.40]{rogers}. 
Let $(Q_t)$ be the semigroup associated to the process
$(X^i_t)$. The stationary measure $\nu_n$ is easily seen to be given by
\[
\nu_n (dt) = c_n \mathbbm 1_{[-1,1]} (t) (1-t^2)^{\frac n 2 - 1 } \,  dt ,
\]
where $c_n$ is the normalization constant. Obviously
$\nu_n$ coincides with the pushforward of $\sigma_n$ by $P_i$. 
\\
We now turn to the actual proof of the theorem. Let $g_1,\dotsc,g_{n+1}$
be non negative functions on $[-1,1]$ and assume (without loss of generality)
that they are bounded away from $0$. Let $f_i = \log ( g_i )$ for all $i$ and let 
\[
f \colon x\in \mathbb S^n \mapsto \sum_{i=1}^{n+1} f_i ( x_i ) .
\]
The functions $f_i,f$ are bounded from below. 
Fix a time horizon $T$, let $U$ be a drift and let $(\Phi_t^U)$ be the process given by 
\[
\left\{
\begin{array}{l}
\Phi^U_0 = \phi \\
d \Phi^U_t  = \mathcal H ( \Phi^U_t ) \circ ( d B_t + d U_t ) , \quad t\leq T. 
\end{array}
\right.
\]
We also let $X_t^U = \pi ( \Phi^U_t )$ be the stochastic development of $B+U$. 
These processes are well defined by the results of the previous section. 
We want to bound $f ( X_T^U ) - \frac 12 \Vert U \Vert_{\mathbb H}^2$ from above. 
By definition 
\[
f ( X_T^U ) = \sum_{i=1}^{n+1} f_i ( P_i  X_T^U ) ) . 
\]
Let $(\theta^i_t)$ be the process given by 
\[
\theta^i_t = \Phi_t^* \left( \frac { \nabla P_i ( X^U_t ) }{ \vert \nabla P_i ( X^U_t ) \vert } \right) 
\]
(again replace this by an arbitrary fixed unit vector if $\nabla P_i (X^U_t) = 0$). 
Then let $(W^i_t)$ be the one dimensional Brownian motion given by 
$d W^i_t = \langle \theta^i_t , d W_t \rangle$
and let $(U^i_t)$ be the one dimensional drift given by 
$d U^i_t = \langle \theta_t^i , d U_t \rangle$. 
The process $(P_i (X_t^U))$ then satisfies 
\begin{equation}\label{eq:jacobi-drift}
d P_i ( X^U_t) = \left( 1 - P_i ( X^U_t )^2 \right)^{1/2} \, \left( d W^i_t + d U^i_t \right) 
 - \frac n 2 \, P_i ( X^U_t ) \, dt . 
\end{equation}
Applying Lemma~\ref{lem:frame}, we easily get
\[
\sum_{i=1}^{n+1} \Vert U^i \Vert^2_{\mathbb H} \leq 2 \Vert U \Vert_{\mathbb H}^2 , 
\] 
almost surely 
(note that in the left hand side of the
inequality $\mathbb H$ is the Cameron--Martin space 
of $\R$ rather than $\R^n$). 
Therefore 
\begin{equation}\label{eq:step}
f ( X_T^U ) - \frac 12 \Vert U \Vert_{\mathbb H}^2 
 \leq \sum_{i=1}^{n+1} \left( f_i ( P_i ( X_T^U ) ) - \frac 14 \Vert U^i \Vert^2_{\mathbb H} \right) .
\end{equation}
Recall~\eqref{eq:jacobi-drift} and apply Theorem~\ref{thm:diffusion} 
to the semigroup $(Q_t)$ and to the function $2 f_i$ rather than $f_i$. 
This gives
\[
\E \left[ f_i ( P_i ( X^U_t ) ) - \frac 14 \Vert U^i \Vert^2_{\mathbb H} \right] 
\leq \frac 12 \log Q_T ( \e^{ 2 f_i } ) ( x_i ) ,
\]
for every $i\leq n+1$. 
Taking expectation in~\eqref{eq:step} thus yields
\[
\E \left[ f ( X_T^U ) - \frac 12 \Vert U \Vert_{\mathbb H}^2 \right]
\leq \frac 12 \sum_{i=1}^{n+1} \log Q_T \left( \e^{2f_i} \right) ( x_i ) . 
\]
Taking the supremum over all drifts $U$ and using 
Theorem~\ref{thm:manifold} we finally obtain
\[
P_T ( \e^f ) (x) \leq \prod_{i=1}^{n+1} \left( Q_T ( \e^{2 f_i} ) (x_i) \right)^{1/2}. 
\]
The semigroup $(P_t)$ is ergodic and converges to $\sigma_n$ 
as $t$ tends to $+\infty$. Similarly $(Q_t)$ converges to $\nu_n$. 
So letting $T$ tend to $+\infty$ in the previous inequality gives
\[
\int_{\mathbb S^n} \e^f \, d \sigma_n
\leq \prod_{i=1}^{n+1} \left( \int_{[-1,1]} \e^{2 f_i}  \, d \nu_n  \right)^{1/2} , 
\]
which is the result. 
\end{proof}
\begin{rem}
Barthe, Cordero--Erausquin and Maurey in~\cite{barthe1} and together 
with Ledoux in~\cite{barthe2} gave several extensions of Theorem~\ref{thm:bl-sphere}.
The method exposed here also allows to recover most of their results. We
chose to stick to the original statement of Carlen, Lieb and Loss 
for simplicity. 
\end{rem}
\section{The dual formula}
In~\cite{lehec1}, we established a dual version 
of Borell's formula~\eqref{eq:borell}. It states as follows: 
If $\mu$ is an absolutely continuous measure on $\R^n$ 
satisfying some (reasonable) technical assumptions, the relative 
entropy of $\mu$ with respect to the Gaussian measure is given by 
the following formula
\begin{equation}\label{eq:borell-dual}
\mathrm H ( \mu \mid \gamma_n ) = 
\inf \left\{ \frac 12 \E\left[ \Vert U \Vert^2_{\mathbb H} \right] \right\}  
\end{equation}
where the infimum is taken over all drifts $U$ such that $B_1+U_1$ 
has law $\mu$. Informally this says that the minimal energy needed to 
constrain the Brownian motion to have a prescribed law at time $1$ 
coincides with the relative entropy of this law with respect to $\gamma_n$. 
The infimum is actually a minimum
and the optimal
drift can be described as follows. 
We let $f$ be the density of $\mu$ 
with respect to $\gamma_n$ and $(P_t)$
be the heat semigroup on $\R^n$. 
The following stochastic differential equation 
\begin{equation}\label{eq:optimal1}
\left\{
\begin{array}{l}
X_0 = 0 \\
d X_t  = d B_t + \nabla \log P_{1-t} f ( X_t ) \, dt, \quad t\leq 1  .
\end{array}
\right. 
\end{equation}
has a unique strong solution. The solution 
satisfies $X_1 = \mu$ in law  
and is optimal in the sense that there
is equality in~\eqref{eq:borell-dual}
for the drift $U$ given by $\dot U_t = \nabla \log P_{1-t} f ( X_t )$. 
The purpose of this section is to describe
the Riemannian counterpart of~\eqref{eq:borell-dual}. 
There is also a version for diffusions such as the ones 
considered in section~\ref{sec:diffusion} but we shall omit
it for the sake of brevity.

The setting of this section is the same as that of
section~\ref{sec:manifold}: $(M,g)$ is a complete 
Riemannian manifold of dimension $n$ 
whose Ricci curvature is bounded from below and $(B_t)$ is a standard 
Brownian motion on $\R^n$. We denote the heat semigroup on $M$ 
by $(P_t)$. 
\begin{thm}
Fix $x\in M$ and a time horizon $T$. 
Let $\mu$ be a probability measure
on $M$, assume that $\mu$ is absolutely 
continuous with respect to $\delta_x P_T$
and let $f$ be its density. If $f$ 
is Lipschitz and bounded away from $0$ 
then 
\[
\mathrm H ( \mu \mid \delta_x P_T ) = 
\inf \left\{ \frac 12 \E \left[ \Vert U \Vert_{\mathbb H}^2 \right] \right\}
\]
where the infimum is taken all drifts $U$ such that 
the stochastic development of $B+U$ starting from $x$
has law $\mu$ at time $T$.
\end{thm}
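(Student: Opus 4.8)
The plan is to prove the two inequalities separately. The bound
$\mathrm H(\mu\mid\delta_xP_T)\le\inf\{\,\cdot\,\}$ will be an immediate consequence of
Theorem~\ref{thm:manifold}, while the matching reverse bound will be realised by an explicit
optimal drift constructed through a change of measure, mimicking the Euclidean
recipe~\eqref{eq:optimal1}.

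For the easy direction I would apply Theorem~\ref{thm:manifold} to the function $\log f$, which
is bounded from below since $f$ is bounded away from $0$. For an arbitrary drift $U$ the theorem
gives $\E[\log f(X_T^U)]-\frac12\E[\Vert U\Vert_{\mathbb H}^2]\le\log P_T(f)(x)$, and since $f$ is
the density of $\mu$ with respect to $\delta_xP_T$ one has
$P_T(f)(x)=\int f\,d(\delta_xP_T)=1$, so the right-hand side vanishes. If moreover $U$ is
admissible, i.e.\ $X_T^U$ has law $\mu$, then $\E[\log f(X_T^U)]=\int\log f\,d\mu
=\mathrm H(\mu\mid\delta_xP_T)$, whence $\mathrm H(\mu\mid\delta_xP_T)\le\frac12\E[\Vert
U\Vert_{\mathbb H}^2]$; taking the infimum over admissible $U$ closes this half.

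For the reverse bound I would exhibit a drift attaining the entropy. Set $g_t=P_{T-t}f$, which
solves the backward heat equation $\partial_t g_t+\frac12\Delta g_t=0$ and is smooth and bounded
away from $0$. Choosing a frame $\phi$ over $x$, letting $\Phi$ be the horizontal Brownian motion
issued from $\phi$ under the reference measure and $X=\pi(\Phi)$, It\^o's formula together with the
backward heat equation shows that $g_t(X_t)$ is a positive local martingale equal to the
stochastic exponential of $\int\langle v_s,dB_s\rangle$ with integrand $v_t=\Phi_t^*\nabla\log
g_t(X_t)$, and that its terminal value is $g_T(X_T)=f(X_T)$. I would then define the tilted
probability $\mathsf Q$ by $d\mathsf Q=f(X_T)\,d\prob$, a genuine probability since
$\E_\prob[f(X_T)]=P_Tf(x)=1$, under which $X_T$ has law $\mu$. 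Girsanov's theorem identifies
$\tilde B_t=B_t-\int_0^t v_s\,ds$ as a $\mathsf Q$-Brownian motion, so that under $\mathsf Q$ the
process $X$ is the stochastic development of $\tilde B+U^*$ with $\dot U^*_t=\Phi_t^*\nabla\log
g_t(X_t)$; as this coupled equation on $\mathcal O(M)$ is driven by $\tilde B$ alone, $U^*$ is
adapted to it, and the pair (driving motion, drift) can be transported back to the reference
filtration to produce an admissible drift. Finally I would compute the energy by expanding the
logarithm of the stochastic exponential along $\tilde B$, whose martingale part has zero $\mathsf
Q$-expectation, to obtain $\frac12\E[\Vert U^*\Vert_{\mathbb H}^2]=\frac12\E_{\mathsf
Q}\int_0^T|v_s|^2\,ds=\E_{\mathsf Q}[\log f(X_T)]=\int\log f\,d\mu=\mathrm H(\mu\mid\delta_xP_T)$,
which yields $\inf\{\,\cdot\,\}\le\mathrm H(\mu\mid\delta_xP_T)$ and shows the infimum is attained.

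The main obstacle I expect is analytic rather than structural: justifying Girsanov's theorem, the
martingale property in the energy identity, and the well-posedness of the equation defining $U^*$
all require controlling $\nabla\log g_t=\nabla\log P_{T-t}f$ uniformly on $[0,T]$, in particular as
$t\to T$ where $f$ is only Lipschitz. Here the hypotheses are exactly what is needed: the lower
Ricci bound furnishes a gradient estimate of the form $|\nabla P_sf|\le C_s\,P_s(|\nabla f|)$ with
$C_s$ depending only on $s$ and the curvature bound, so that $|\nabla g_t|$ stays bounded by the
Lipschitz constant of $f$, while $g_t\ge\inf f>0$; together these bound $|\nabla\log g_t|$
uniformly, securing Novikov's condition and the finiteness $\frac12\E[\Vert U^*\Vert_{\mathbb
H}^2]\le\frac12(\sup_t\Vert\nabla\log g_t\Vert_\infty^2)\,T<\infty$. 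A secondary point to handle with
care is the transport of the joint law from $\mathsf Q$ back to the reference measure, which rests
precisely on $U^*$ being a strong, adapted functional of the driving Brownian motion.
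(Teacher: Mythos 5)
Your proposal is correct and follows essentially the same route as the paper: the lower bound via Theorem~\ref{thm:manifold} applied to $\log f$ is exactly the alternative the paper itself suggests (the Gibbs variational formula specialised to the optimal test function), and your optimal drift $\dot U_t = \Phi_t^*\nabla\log P_{T-t}f(X_t)$, together with the Ricci-based gradient bound and the Novikov/Girsanov justification, is precisely the content of the paper's equation~\eqref{eq:optimal2} and the theorem that follows. The only cosmetic difference is the direction of the change of measure: you $h$-transform the undrifted Brownian motion by $f(X_T)$ and transport the resulting pair back to the reference space (which, as you note, hinges on strong well-posedness of the drifted equation), whereas the paper solves the drifted equation directly under $\prob$ and tilts back by $1/f(Y_T)$ --- both reduce to the same exponential-martingale identity for $P_{T-t}f$ along the trajectory.
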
 
Proving that any drift satisfying the constraint has energy at least as large
as the relative entropy of $\mu$ is a straightforward adaptation of 
Proposition~1 from~\cite{lehec1}, and we shall leave this to the reader. 
Alternatively, one can use Theorem~\ref{thm:manifold} and combine it with the 
following variational formula for the entropy: 
\[
\mathrm H ( \mu \mid \delta_x P_T ) = \sup_f 
\left\{ \int_M f \, d \mu - \log P_T ( \e^f ) (x) \right\} 
\]
(again details are left to the reader). 
Besides, as in the Euclidean case, there is actually an optimal drift, 
whose energy is exactly the relative entropy of $\mu$. 
This is the purpose of the next result. 
\begin{thm}
Let $\phi$ be a fixed element of $\mathcal O(M)$. 
Let $x=\pi( \phi)$ and let $T$ be a time horizon. 
Let $\mu$ have density $f$ with respect to $\delta_x P_T$,
and assume that $f$ is Lipschitz and bounded away from $0$. 
The stochastic differential equation 
\begin{equation}\label{eq:optimal2}
\left\{
\begin{array}{l}
\Phi_0 = \phi \\
d \Phi_t  = \mathcal H ( \Phi_t ) \circ 
\left( d B_t + \Phi_t^* \nabla \log P_{T-t} f ( Y_t ) \right) \\
Y_t = \pi ( \Phi_t ) 
\end{array}
\right.
\end{equation}
has a unique strong solution on $[0,T]$. 
The law of the process $(Y_t)$
is given by the following formula:
For every functional  
$H \colon \mathcal C ( [0,T] ; M ) \to \R$ 
we have
\begin{equation}\label{eq:lawY}
\E \left[ H ( Y ) \right] = \E \left[ H(X) f ( X_1 ) \right] ,
\end{equation}
where $(X_t)$ is a Brownian motion on $M$
starting from $x$. 
In other words $Y_T$ has law $\mu$ and the bridges of $Y$
equal those of the Brownian motion on $M$, in law. 
Moreover, letting $U$ be the drift given by 
\begin{equation}\label{eq:defU}
U_t = \int_0^t \Phi_s^* \nabla \log P_{T-s} f ( Y_s)  \, ds , \quad t \leq T,
\end{equation} 
we have 
\[
\mathrm H ( \mu \mid \delta_x P_T ) =  \frac 12 \E \left[ \Vert U \Vert_{\mathbb H}^2 \right] . 
\]
\end{thm}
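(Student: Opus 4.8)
The plan is to treat \eqref{eq:optimal2} as a Doob $h$--transform, equivalently a Girsanov change of measure, of the horizontal Brownian motion, the role of the harmonic function being played by the space--time heat kernel $h(t,y)=P_{T-t}f(y)$. First I would record two preliminary facts. Since $\mu$ is a probability measure with density $f$ against $\delta_x P_T$, we have $P_T f(x)=\int_M f\,d(\delta_x P_T)=1$. Next, since $M$ has Ricci curvature bounded below, say by $-K$, the Bakry--\'Emery gradient estimate gives $|\nabla P_s f|\leq \e^{Ks/2}P_s(|\nabla f|)\leq \e^{KT/2}\,\mathrm{Lip}(f)$ for $s\leq T$; combined with the bound $P_s f\geq\inf f>0$ this shows that the drift $b(t,y)=\nabla\log P_{T-t}f(y)$ is bounded on $[0,T]\times M$, uniformly up to the terminal time. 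This boundedness is what makes \eqref{eq:optimal2} well posed: it is the horizontal equation \eqref{eq:hbm2} perturbed by a bounded adapted drift, so it has a unique strong solution on $[0,T]$, and the associated exponential local martingale is automatically uniformly integrable.

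For the law formula \eqref{eq:lawY} I would argue by change of measure. Let $(\Phi_t)$ solve \eqref{eq:hbm2} from $\phi$ under $\prob$, put $X=\pi(\Phi)$, and set $M_t=P_{T-t}f(X_t)$. The map $(t,y)\mapsto P_{T-t}f(y)$ is space--time harmonic, i.e. $(\partial_t+\tfrac12\Delta)P_{T-t}f=0$, so by the It\^o formula \eqref{eq:ftruc} the drift terms cancel and $dM_t=M_t\langle\Phi_t^*\nabla\log P_{T-t}f(X_t),dB_t\rangle$; thus $M$ is the exponential martingale with $M_0=P_Tf(x)=1$. Define $\mathsf Q$ by $d\mathsf Q/d\prob=M_T=f(X_T)$. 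By Girsanov, under $\mathsf Q$ the process $\tilde B_t=B_t-\int_0^t\Phi_s^*\nabla\log P_{T-s}f(X_s)\,ds$ is a standard Brownian motion, and rewriting \eqref{eq:hbm2} in terms of $\tilde B$ shows that under $\mathsf Q$ the pair $(\Phi,X)$ solves exactly the system \eqref{eq:optimal2} driven by $\tilde B$. By the uniqueness above, the law of $X$ under $\mathsf Q$ coincides with the law of $Y$ under $\prob$, which is precisely \eqref{eq:lawY}. Since the density $f(X_T)$ depends only on the endpoint, it is constant after conditioning on the two endpoints, so the bridges of $Y$ agree with those of Brownian motion; in particular $Y_T$ has law $\mu$.

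It remains to identify the energy of $U$ with the entropy. From \eqref{eq:lawY} we get $\mathrm H(\mu\mid\delta_xP_T)=\int_M\log f\,d\mu=\E[\log f(Y_T)]$. I would then apply the manifold It\^o formula to $v(t,y)=\log P_{T-t}f(y)$ along the drifted development $Y$. Writing $h=P_{T-t}f$ and using $\partial_t h+\tfrac12\Delta h=0$ one finds $\partial_t v+\tfrac12\Delta v=-\tfrac12|\nabla v|^2$, while the added drift contributes $\langle\nabla v,\Phi_t\dot U_t\rangle=|\nabla v|^2$ because $\dot U_t=\Phi_t^*\nabla v$ by \eqref{eq:defU} and $\Phi_t\Phi_t^*$ is the identity. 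These combine to
\[
\log f(Y_T)=\int_0^T\langle\dot U_t,dB_t\rangle+\frac12\int_0^T|\dot U_t|^2\,dt=\int_0^T\langle\dot U_t,dB_t\rangle+\frac12\Vert U\Vert_{\mathbb H}^2,
\]
using $v(0,Y_0)=\log P_Tf(x)=0$, $v(T,Y_T)=\log f(Y_T)$, and that $\Phi_t^*$ is an isometry. The boundedness of the drift makes the stochastic integral a genuine mean zero martingale, so taking expectations yields $\mathrm H(\mu\mid\delta_xP_T)=\tfrac12\E[\Vert U\Vert_{\mathbb H}^2]$.

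The main obstacle I anticipate is purely technical and concentrated at the terminal time: $v(t,\cdot)=\log P_{T-t}f$ is smooth and the drift is controlled for $t<T$, but $v(T,\cdot)=\log f$ is merely Lipschitz, so the It\^o computation must be justified by running the argument on $[0,T-\varepsilon]$ and letting $\varepsilon\to0$, invoking the uniform gradient bound of the first paragraph to pass to the limit. Everything else, namely the Girsanov step, the cancellation of drifts, and the entropy computation, is mechanical once that bound is in place.
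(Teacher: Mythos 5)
Your proposal is correct and follows essentially the same route as the paper: the Ricci--based gradient bound makes the drift $\nabla\log P_{T-t}f$ bounded (giving well--posedness of the SDE), the identity $\partial_t F_t=-\tfrac12(\Delta F_t+\vert\nabla F_t\vert^2)$ for $F_t=\log P_{T-t}f$ plus It\^o's formula gives $\log f(Y_T)=\int_0^T\langle\dot U_t,dB_t\rangle+\tfrac12\Vert U\Vert_{\mathbb H}^2$, and Girsanov together with the vanishing expectation of the martingale term yields both the law formula and the entropy identity. The only cosmetic difference is the direction of the change of measure: you tilt the Brownian motion forward by the Doob density $f(X_T)$ and invoke uniqueness of the drifted equation, whereas the paper tilts $Y$ by $1/f(Y_T)$ to recover a Brownian motion; the two are equivalent.
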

\begin{proof} 
Since $\mathrm{Ric} \geq - \lambda\,g$, we have the following estimate for the 
Lipschitz norm of $f$: 
\[
\Vert P_t f \Vert_{\rm Lip} \leq \e^{\lambda t /2 } \Vert f \Vert_{\rm Lip} . 
\]
One way to see this is to use Kendall's coupling for Brownian 
motions on a manifold, see for instance~\cite[section 6.5]{hsu}. 
Alternatively, it is easily derived from the commutation property
$\vert \nabla P_t f \vert^2 \leq \e^{\lambda t} P_t (\vert \nabla f \vert^2 )$
which, in turn, follows from Bochner's formula, see~\cite[Theorem 3.2.3]{bakry}. 
Recall that $f$ is assumed to be bounded away from $0$, and for 
every $t\leq T$ let $F_t = \log P_{T-t} f$. 
Then $(t,x)\mapsto \nabla F_t (x)$ is smooth and bounded
on $[0,T[ \times M$, which is enough to insure the existence of a 
unique strong solution 
to~\eqref{eq:optimal2}. Besides an easy computation shows that
\begin{equation}\label{eq:partialtF}
\partial_t F_t = - \frac 12 ( \Delta F_t + \vert \nabla F_t \vert^2 ) . 
\end{equation}
Then using~\eqref{eq:optimal2} and It\^o's formula we get
\[
\begin{split}
d F(t,Y_t) & = \langle \nabla F_t (Y_t ) , \Phi_t d B_t \rangle + 
\frac 12 \vert \nabla F_t (Y_t ) \vert^2 \, dt. \\
& = \langle \dot U_t	 , d B_t \rangle + 
\frac 12 \vert  \dot U_t \vert^2 \, dt 
\end{split}
\]
(recall the definition~\eqref{eq:defU} of $U$).
Therefore 
\begin{equation}\label{eq:stepsomething}
\frac 1 {f ( Y_T )} = \e^{ - F_T (Y_T)} 
= \exp \left(  - \int_0^T \langle \dot U_t , d B_t \rangle
- \frac 12 \Vert U \Vert_{\mathbb H}^2 \right) . 
\end{equation}
Observe that the variable $\Vert U \Vert_{\mathbb H}$ is bounded
(just because $\nabla F$ is bounded). 
So Girsanov's formula applies: $1/f(Y_T)$ has expectation $1$ 
and under the measure $\mathsf Q$ 
given by $d \mathsf Q = ( 1/ f(Y_T) ) \, d \mathsf P$
the process $B+U$ is a standard Brownian motion on $\R^n$. 
Since $Y$ is the stochastic development of
$B+U$ starting from $x$, this shows that under $\mathsf Q$
the process $Y$ is a Brownian motion on $M$ starting from 
$x$. This is a mere reformulation of~\eqref{eq:lawY}. 
For the entropy equality observe that since $Y_T$ 
has law $\mu$, we have 
\[
\mathrm H ( \mu \mid \delta_x P_T ) = \E [ \log f ( Y_T ) ] . 
\]
Using~\eqref{eq:stepsomething} again and the fact that 
$\int \langle \dot U_t ,d B_t \rangle$ is a martingale 
we get the desired equality. 
\end{proof}
To conclude this article, let us derive from this formula
the log--Sobolev inequality for a manifold 
having a positive lower bound on its Ricci curvature. 
This is of course well--known, but our point 
is only to illustrate how the previous theorem can be used to prove inequalities. 
Recall the definition of the Fisher information: if $\mu$ is a probability 
measure on $M$ having Lipschitz and positive density $f$ with respect to some 
reference measure $m$, the relative Fisher information of $\mu$ with respect to
$m$ is defined by
\[
\mathrm I ( \mu \mid m ) 
= \int_M \frac{ \vert \nabla f \vert^2 } f  \, dm 
= \int_M  \vert \nabla \log f \vert^2   \, d \mu . 
\]
By Bishop's Theorem, if $\mathrm{Ric} \geq \kappa \, g$ pointwise
for some positive $\kappa$ then the volume measure on $M$ 
is finite. We let $m$ be the volume measure normalized to be a probability measure. 
\begin{thm}
If $\mathrm {Ric} \geq \kappa \, g$ 
pointwise for some $\kappa >0$,
then for any probability 
measure $\mu$ on $M$ having 
a Lipschitz and positive density with respect to $m$ we have
\begin{equation}\label{eq:logsob1}
\mathrm H ( \mu \mid m ) 
\leq \frac n 2  \log \left( 1 + \frac { \mathrm I ( \mu \mid m )}{ n \, \kappa } \right) . 
\end{equation}
\end{thm}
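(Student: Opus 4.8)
The plan is to deduce~\eqref{eq:logsob1} from the preceding (optimal drift) theorem by sending the time horizon to infinity. Since $\mathrm{Ric}\geq\kappa\,g$ with $\kappa>0$, the volume is finite (Bishop) and the heat semigroup $(P_t)$ is ergodic, so $\delta_x P_T\to m$ as $T\to\infty$. I would fix $x$ and a large horizon $T$, regard the density $f$ of $\mu$ as a density with respect to $\delta_x P_T$ (the normalisation being harmless since $P_T f(x)\to1$), and apply the optimal drift theorem. Writing $F_t=\log P_{T-t}f$ and letting $(Y_t,\Phi_t)$ be the associated optimal process, and using that $\Phi_t$ is an isometry so $\vert\dot U_t\vert=\vert\nabla F_t(Y_t)\vert$, this gives
\[
\mathrm H(\mu\mid\delta_x P_T)=\tfrac12\,\E\!\left[\Vert U\Vert_{\mathbb H}^2\right]=\tfrac12\int_0^T a(t)\,dt,\qquad a(t):=\E\!\left[\,\vert\nabla F_t(Y_t)\vert^2\,\right].
\]
Using the law of $Y$ furnished by that theorem together with the Markov property, $a(t)=\int_M \tfrac{\vert\nabla P_{T-t}f\vert^2}{P_{T-t}f}\,d(\delta_x P_t)$, so after the substitution $s=T-t$ and the limit $T\to\infty$ the identity becomes $\mathrm H(\mu\mid m)=\tfrac12\int_0^\infty I(s)\,ds$, where $I(s)=\int_M \tfrac{\vert\nabla P_s f\vert^2}{P_s f}\,dm$ and $I(0)=\mathrm I(\mu\mid m)$.

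The heart of the matter is a differential inequality for $a$. I would apply It\^o's formula to $t\mapsto\vert\nabla F_t(Y_t)\vert^2$ along the diffusion $Y$, whose generator is $\tfrac12\Delta+\langle\nabla F_t,\nabla\,\cdot\,\rangle$, using the Hamilton--Jacobi equation~\eqref{eq:partialtF} for the time derivative of $F_t$ and Bochner's formula for the spatial part. All the first order terms cancel and one is left with the clean $\Gamma_2$--type identity
\[
a'(t)=\E\!\left[\,\vert\nabla^2 F_t(Y_t)\vert^2+\mathrm{Ric}\big(\nabla F_t,\nabla F_t\big)(Y_t)\,\right].
\]
Here I would feed in the two dimensional inputs: the curvature bound gives $\mathrm{Ric}(\nabla F_t,\nabla F_t)\geq\kappa\,\vert\nabla F_t\vert^2$, and the Cauchy--Schwarz bound $\vert\nabla^2 F_t\vert^2\geq(\Delta F_t)^2/n$ brings in the dimension. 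With Jensen's inequality this yields $a'(t)\geq\kappa\,a(t)+\tfrac1n\big(\E[\Delta F_t(Y_t)]\big)^2$.

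To close the inequality I would use stationarity: integrating by parts against the (asymptotically stationary) law of $Y_t$ gives $\E[\Delta F_t(Y_t)]\to\int_M\Delta(\log P_s f)\,P_s f\,dm=-I(s)$ as $T\to\infty$. Passing to the limit, and writing the estimate in the time variable $s=T-t$, turns the above into the autonomous Riccati inequality
\[
I'(s)\leq-\kappa\,I(s)-\tfrac1n\,I(s)^2,\qquad I(0)=\mathrm I(\mu\mid m).
\]
I would then compare $I$ with the solution $J$ of the equality $J'=-\kappa J-\tfrac1n J^2$ with $J(0)=I(0)$; since $y\mapsto-\kappa y-y^2/n$ is non-increasing, $I\leq J$. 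Integrating explicitly, from $J=-n\,\tfrac{d}{ds}\log(J+n\kappa)$ and $J(\infty)=0$ one gets $\int_0^\infty J\,ds=n\log\!\big(1+\tfrac{I(0)}{n\kappa}\big)$, whence
\[
\mathrm H(\mu\mid m)=\tfrac12\int_0^\infty I(s)\,ds\leq\tfrac12\int_0^\infty J(s)\,ds=\tfrac n2\log\!\Big(1+\tfrac{\mathrm I(\mu\mid m)}{n\kappa}\Big),
\]
which is~\eqref{eq:logsob1}. I expect the main obstacle to be analytic rather than algebraic: rigorously justifying the $T\to\infty$ passage to the limit (the convergence $\delta_x P_T\to m$, convergence of the entropies and Fisher--information integrals, and enough uniform control of the drift energy to take the limit inside the derivative), together with the stationary integration by parts that converts the spurious $\E[\Delta F_t(Y_t)]$ term into $-a(t)$ and thereby renders the differential inequality autonomous. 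The Bochner computation and the final ODE comparison are routine.
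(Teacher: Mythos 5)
Your proposal follows the paper's proof in all its essential steps: the entropy representation $\mathrm H(\mu\mid\delta_xP_T)=\tfrac12\int_0^T a(t)\,dt$ from the optimal-drift theorem, the It\^o/Bochner computation giving $a'(t)\geq\E[\Vert\nabla^2F_t(Y_t)\Vert_{HS}^2+\mathrm{Ric}(\nabla F_t,\nabla F_t)(Y_t)]$, the Cauchy--Schwarz/Jensen step $\E[\Vert\nabla^2F_t\Vert_{HS}^2]\geq\tfrac1n\E[\Delta F_t(Y_t)]^2$, and the final Riccati integration. The one genuine difference is the order of limits, and it is exactly where your acknowledged gap sits. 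You pass to $T\to\infty$ first, so as to replace $\E[\Delta F_t(Y_t)]$ by $-I(s)$ via a stationary integration by parts and obtain the autonomous inequality $I'(s)\leq-\kappa I(s)-\tfrac1nI(s)^2$; this requires convergence not only of $a_T(T-s)$ but of its derivative, uniformly enough to carry the differential inequality through the limit, and you do not supply that. The paper avoids the issue entirely by staying at finite $T$: the change of measure~\eqref{eq:lawY} gives the \emph{exact} identity $\E[\Delta F_t(Y_t)+\vert\nabla F_t(Y_t)\vert^2]=\Delta P_Tf(x)=:C_T$ for every $t\leq T$, hence $\E[\Delta F_t(Y_t)]=C_T-a(t)$ and the closed (non-autonomous only through the constant $C_T$) inequality $a'(t)\geq\tfrac1n a(t)(a(t)+n\kappa-2C_T)$. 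This is integrated explicitly on $[0,T]$, and the limit $T\to\infty$ is taken only in the resulting \emph{integrated} inequality, where the required convergences ($\int_0^Ta\to2\mathrm H(\mu\mid m)$, $a(T)\to\mathrm I(\mu\mid m)$, $C_T\to0$) are easy. If you want to keep your autonomous formulation, the cleanest fix is to import the paper's finite-$T$ identity for $\E[\Delta F_t(Y_t)]$ (which is your integration by parts, performed against the exact law of $Y_t$ rather than its $T\to\infty$ limit) and postpone the limit to the integrated estimate, as the paper does.
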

\begin{rems}
Since $\log (1+x) \leq x$ this inequality is a dimensional improvement of the
more familiar inequality 
\begin{equation}\label{eq:logsob0}
\mathrm H ( \mu \mid m  ) \leq \frac 1 \kappa \, \mathrm I ( \mu \mid m ) . 
\end{equation}
It is known (see for instance~\cite[section 5.6]{ane})
that~\eqref{eq:logsob0} admits yet another sharp form
that takes the dimension into account, namely 
\begin{equation}\label{eq:logsob2}
\mathrm H ( \mu \mid m  ) \leq \frac {n-1} {\kappa\, n} \, \mathrm I ( \mu \mid m ) , 
\end{equation}
but we were not able to recover this one with our method. 
Note also that depending on the measure $\mu$, the right hand side of~\eqref{eq:logsob2} 
can be smaller than that of~\eqref{eq:logsob1}, or the other way around. 
\end{rems}
\begin{proof}
By the  Bonnet--Myers theorem $M$ is compact.  Fix $x\in M$ and a time horizon $T$.
Let $p_T ( x, \cdot )$ be the density of the measure $\delta_x P_T$ with respect to
$m$ (in other words let $(p_t)$ be the heat kernel on $M$).  
If $d \mu = \rho\, d m$ then $\mu$ has density $f = \rho / p_T(x, \cdot)$ with respect to 
$\delta_x P_T$. Since $p_T ( x ,\cdot )$ is smooth and positive
(see for instance~\cite[chapter 6]{chavel}) $f$ satisfies the technical assumptions
of the previous theorem. Let $F_t =\log P_{T-t} f$ and let $(Y_t)$ be the process 
given by~\eqref{eq:optimal2}.
We know from the previous theorem that
\begin{equation}\label{eq:entropylogsob}
\mathrm H ( \mu \mid \delta_x P_T ) =
\frac 12\, \E \left[ \int_0^T \vert \nabla F_t (Y_t) \vert^2 \, dt \right] . 
\end{equation}   
Using~\eqref{eq:partialtF} we easily get
\[
\partial_t ( \vert \nabla F \vert^2 ) =  - \langle \nabla \Delta F  , \nabla F  \rangle 
 - \langle \nabla \vert \nabla F \vert^2 , \nabla F \rangle . 
\]
Applying It\^o's formula we obtain after some computations 
(omitting variables in the right hand side)
\[
d \vert \nabla F (t,Y_t)\vert^2
=  \langle \nabla \vert \nabla F \vert^2 ,  \Phi_t d B_t \rangle 
 - \langle \nabla \Delta F , \nabla F \rangle \, dt 
+ \frac 12 \Delta \vert \nabla F \vert^2 \, dt  . 
\]
Now recall Bochner's formula
\[
\frac 12 \Delta | \nabla F |^2 = \langle \nabla \Delta F , \nabla F \rangle
 + \| \nabla^2 F \|_{HS}^2 + \mathrm{Ric} ( \nabla F , \nabla F ) .  
\]
So that
\begin{equation}
\label{eq:steplogsob}
d \vert \nabla F (t,Y_t)\vert^2
=  \langle \nabla \vert \nabla F \vert^2 ,  \Phi_t d B_t \rangle  
+ \Vert \nabla^2 F \Vert_{HS}^2 \, dt  + \mathrm{Ric} ( \nabla F , \nabla F ) \, dt .
\end{equation}
Since $\nabla F$ is bounded and the Ricci curvature non negative,
the local martingale part in the above equation is bounded from 
above. So by Fatou's lemma it is a sub--martingale,
and its expectation is non decreasing. So taking expectation
in~\eqref{eq:steplogsob} and using the hypothesis 
$\mathrm{Ric} \geq \kappa \, g$ we get
\begin{equation}\label{eq:steplogsob2}
\frac d{dt} \, \E\left[ \vert \nabla F_t (Y_t) \vert^2 \right]
\geq \E \left[ \Vert \nabla^2 F_t (Y_t)  \Vert_{HS}^2 \right]  
+ \kappa \, \E\left[ \vert \nabla F_t (Y_t) \vert^2 \right] . 
\end{equation}
Throwing away the Hessian term
would lead us to the inequality~\eqref{eq:logsob0}.  
Let us exploit this term instead. 
Using Cauchy-Schwartz and Jensen's inequality we get
\[
\E \left[ \Vert \nabla^2 F_t (Y_t) \Vert_{HS}^2 \right] \geq 
\frac 1 n \E \left[ \Delta F_t (Y_t)^2 \right]
\geq \frac 1 n  \, \E [ \Delta F_t (Y_t) ]^2. 
\]
Also, by~\eqref{eq:lawY} and recalling that $(P_t)$ is
the heat semigroup on $M$ we obtain 
\[
\begin{split}
\E \left[  \Delta F_t (Y_t) + | \nabla F_t ( Y_t ) |^2 \right]
& = \E \left[ \frac { \Delta P_{T-t} f ( Y_t ) }{ P_{T-t} f (Y_t) } \right] 
  = \E \left[ \frac { \Delta P_{T-t} f ( X_t ) }{ P_{T-t} f (X_t) } f(X_T) \right] \\
& = \E \left[ \Delta P_{T-t} f (X_t) \right] = \Delta P_T (f) (x) . 
\end{split}
\]
Letting $\alpha (t) = \E \left[ \vert \nabla F_t ( Y_t ) \vert^2 \right]$
and $C_T = \Delta P_T f (x)$ we thus get from~\eqref{eq:steplogsob2} 
\[
\begin{split}
\alpha'(t) 
& \geq \kappa \alpha (t) + \frac 1n ( \alpha (t) - C_T )^2  \\
& \geq  \frac 1 n \, \alpha(t) \, ( \alpha(t) +  n \, \kappa - 2 C_T  ) 
\end{split}
\]
Since $P_T f$ tends to a constant function as $T$ tends to $+\infty$, 
$C_T$ tends to $0$. So if $T$ is large enough $n \kappa - 2 C_T$ 
is positive and the differential inequality above yields
\[
\alpha(t) \leq \frac { n \, \kappa (T) \, \alpha (T) }
{ \e^{ \kappa(T) (T-t ) } \left( n \, \kappa(T) + \alpha(T) \right) - \alpha (T) } , \quad t \leq T 
\]
where $\kappa(T) = \kappa - 2 C_T / n$. 
Integrating this between $0$ and $T$ we get
\begin{equation}\label{eq:steplogsob3}
\int_0^T \alpha (t) \, dt \leq n \, \log \left( 1 + 
 \frac { \alpha(T) (1-\e^{ - \kappa(T) T}) }{ n \,  \kappa(T) } \right) .  
\end{equation}
Observe that $\kappa(T)\to \kappa$ as $T$ tends to 
$+\infty$. By~\eqref{eq:entropylogsob} and since
$(\delta_x P_t)$ converges to $m$
measure on $M$ we have
\[
\int_0^T \alpha (t) \, dt
\to 2\, \mathrm H ( \mu \mid m) ,
\]
as $T$ tends to $+\infty$. 
Also, since $Y_T$ has law $\mu$
\[
\alpha (T) = \E \left[ \vert \nabla \log f ( Y_T ) \vert^2 \right] =
\mathrm I ( \mu \mid \delta_x P_T ) \to \mathrm I ( \mu \mid m ). 
\]
Therefore, letting $T$ tend to $+\infty$ in~\eqref{eq:steplogsob3} yields
\[
\mathrm H ( \mu \mid m ) 
\leq \frac n 2  \log \left( 1 + \frac { \mathrm I ( \mu \mid m )}{ n \, \kappa } \right) , 
\]
which is the result.  
\end{proof}
Let us give an open problem to finish this article.
We already mentioned that Borell recovered 
the Pr\'ekopa--Leindler inequality 
from~\eqref{eq:borell}. It is
natural to ask whether there is probabilistic proof of
the Riemannian Pr\'ekopa--Leindler inequality of Cordero, McCann
and Schmuckenschläger~\cite{cordero} based on Theorem~\ref{thm:manifold}. 
Copying naively Borell's argument, we soon face the following difficulty:
If $X$ and $Y$ are two Brownian motions on a manifold 
coupled by parallel transport, then unless the manifold is flat, 
the midpoint of $X$ and $Y$ is not a Brownian motion. 
We believe that there is a way around this but we could
not find it so far.


\begin{thebibliography}{plain}

\bibitem{ane}
C.~An\'e et al. Sur les in\'egalit\'es de Sobolev logarithmiques. Panoramas et Synth\`eses 10. SMF, Paris, 2000.

\bibitem{bakry}
D.~Bakry, I.~Gentil and M.~Ledoux. 
Analysis and geometry of Markov diffusion operators. 
Grundlehren der Mathematischen Wissenschaften [Fundamental Principles of Mathematical Sciences], 348. Springer, Cham, 2014.

\bibitem{barthe1}
F.~Barthe, D.~Cordero--Erausquin and B.~Maurey. 
\emph{Entropy of spherical marginals and related inequalities.} 
J. Math. Pures Appl. (9) 86 (2006), no.~2, 89--99.

\bibitem{barthe2}
F.~Barthe, D.~Cordero--Erausquin, M.~Ledoux and B.~Maurey.
\emph{Correlation and Brascamp--Lieb inequalities for Markov semigroups.}
Int. Math. Res. Not. IMRN 2011, no.~10, 2177--2216.

\bibitem{borell}
C.~Borell. 
\emph{Diffusion equations and geometric inequalities},
Potential Anal. 12 (2000), no~1, 49--71.

\bibitem{boue-dupuis}
M.~Bou\'e and P.~Dupuis. 
\emph{A variational representation for certain functionals of Brownian motion}.
Ann. Probab. 26 (1998), no~4, 1641--1659.

\bibitem{carlen}
E.A.~Carlen, E.H.~Lieb and M.~Loss. 
\emph{A sharp analog of Young's inequality on $\mathbb S^N$ and related entropy inequalities}.
J. Geom. Anal. 14 (2004), no.~3, 487--520. 
 
\bibitem{chavel}
I.~Chavel. 
Eigenvalues in Riemannian geometry. 
Including a chapter by Burton Randol. With an appendix by Jozef Dodziuk. 
Pure and Applied Mathematics, 115. Academic Press, Inc., Orlando, FL, 1984.
 
\bibitem{cordero}
D.~Cordero--Erausquin, R.J~McCann and M.~Schmuckenschläger.
\emph{A Riemannian interpolation inequality \`a la Borell, Brascamp and Lieb}.
Invent. Math. 146 (2001), no. 2, 219--257.

\bibitem{hsu}
E.P.~Hsu.
Stochastic analysis on manifolds. 
Graduate Studies in Mathematics, 38. 
American Mathematical Society, Providence, RI, 2002.

\bibitem{ikeda}
N.~Ikeda and S.~Watanabe. 
Stochastic differential equations and diffusion processes. 
Second edition. North--Holland Mathematical Library, 24. 
North--Holland Publishing Co., Amsterdam; Kodansha, Ltd., Tokyo, 1989.

\bibitem{karatzas}
I.~Karatzas and S.E.~Shreve. 
Brownian motion and stochastic calculus. Second edition. 
Graduate Texts in Mathematics, 113. Springer--Verlag, New York, 1991.

\bibitem{lehec1}
J.Lehec,~\emph{Representation formula for the entropy and functional inequalities}. 
Ann. Inst. Henri Poincar\'e Probab. Stat. 49 (2013), no.~3, 885--899. 

\bibitem{lehec2}
J.~Lehec,
\emph{Short probabilistic proof of the Brascamp--Lieb and Barthe theorems}. 
Canad. Math. Bull. 57 (2014), no.~3, 585--597.

\bibitem{rogers}
L.C.G.~Rogers and R.~Williams.
Diffusions, Markov processes, and martingales. Vol. 2. It\^o calculus. 
Reprint of the second (1994) edition. Cambridge Mathematical Library. 
Cambridge University Press, Cambridge, 2000.

\end{thebibliography}
\end{document}